\theoremstyle{plain}
\newtheorem{theorem}{Theorem}[section]
\theoremstyle{definition}
\newtheorem{remark}[theorem]{Remark}
\newcommand{\Z}{{\mathbb Z}}
\newcommand{\R}{{\mathbb R}}
\numberwithin{equation}{section}
\begin{document}

\title[Orlicz-Lorentz Gauge Functional Inequalities for Positive Integral Operators (Revised Version)]{Orlicz-Lorentz Gauge Functional Inequalities for Positive Integral Operators\\ (Revised Version)}

\author[R.~Kerman]{Ron Kerman}
\address{R.~Kerman\\Department of Mathematics and Statistics,
Brock University, 1812 Sir Isaac Brock Way, St. Catharines, ON L2S 3A1
}
\email{rkerman@brocku.ca}

\author[S.~Spektor]{Susanna Spektor}
\address{S.~Spektor\\Department of Mathematics and Statistics
Sciences, PSB,
Sheridan College Institute of Technology and Advanced Learning,
4180 Duke of York Blvd., Mississauga, ON L5B 0G5}
\email{susanna.spektor@sheridancollege.ca}

\maketitle

\begin{abstract}
Let $f \in M_+(\R_+)$, the class of  nonnegative,  Lebesgure-measurable functions on $\R_+=(0, \infty)$. We deal with integral operators of the form
\[
(T_Kf)(x)=\int_{\R_+}K(x,y)f(y)\, dy, \quad x \in \R_+,
\]
with $K \in M_+(\R_+^2)$.

We are interested in inequalities
\[
\rho_{1}((T_Kf)^*)\leq C\rho_2(f^*),
\]
in which $\rho_1$ and $\rho_2$ are functionals on functions $h \in M_+(\R_+)$,
and
\[
h^*=\mu_h^{-1}(t), \quad t \in \R_+,
\]
where
\[
\mu_h(\lambda)=|\{x \in \R_+: \, h(x)> \lambda\}|.
\]
Specifically, $\rho_1$ and $\rho_2$ are so-called Orlicz-Lorentz functionals of the type
\[
\rho(h)=\rho_{\Phi, u}(h)=\inf\left\{\lambda>0:\, \int_{\R_+}\Phi\left(\frac{h(x)}{\lambda}\right)u(x)\, dx \leq 1\right\}, \quad h \in M_+(\R_+);
\]
here $\Phi(x)=\int_0^x\phi(y)\, dy$, $\phi$ an increasing mapping from $\R_+$ onto itself and $u\in  M_+(\R_+)$.
\medskip

\noindent 2020 Classification: 46B06, 60C05
%

\noindent Keywords: Integral operator, Orlicz-Lorentz gauge functional
\end{abstract}




\setcounter{page}{1}
\section{Introduction}

Let  $K \in M_+(\R_+^{2})$, the class of nonnegative Lebesgue-measurable functions on $\qquad \qquad\qquad$ $\R_+^{2}=\R_+\times \R_+, \, \R_+=(0, \infty)$.

We consider positive integral operators $T_K$ defined at $f \in M_+(\R^n_+)$ by
\[
(T_Kf)(x)=\int_{\R_+}K(x,y)f(y)\, dy, \quad x \in \R_+.
\]

We are interested in Orlicz gauge functionals $\rho_1$ and $\rho_2$ on $M_+(\R_+)$ for which
\begin{align}\label{1}
\rho_1((T_Kf)^*)\leq C \rho_2(f^*),
\end{align}
where $C>0$ is independent of $f$.

The function $f^*$ in (\ref{1}) is the nonincreasing rearrangement  of $f$ on $\R_+$, with
\[
f^*(t)=\mu_f^{-1}(t),
\]
and
\[
\mu_f(\lambda)=|\{x:\, f(x)>\lambda\}|.
\]

A gauge functional $\rho$ is given in terms of an $N$-function
\[
\Phi(x)=\int_0^x\phi(y)\, dy, \quad x \in \R_+,
\]
$\phi$ being an increasing function mapping $\R_+$ onto itself, {and}  $u$ a locally-integrable (weight) function  in $M_+(\R_+)$. Specifically, the gauge functional $\rho=\rho_{\Phi,u}$ is defined at $f\in M_+(\R_+)$ by
\[
\rho_{\Phi,u}(f)=\inf\{\lambda>0: \int_{\R_+}\Phi\left(\frac{f(x)}{\lambda}\right)u(x)\, dx\leq 1\}.
\]
Thus, in (\ref{1}), $\rho_1=\rho_{\Phi_1,u_1}$ and $\rho_2=\rho_{\Phi_2, u_2}$. The gauge functionals in (\ref{1}) involving rearrangements are referred to as Orlicz-Lorentz functionals.

The inequality (1)
is shown to follow from
\begin{align}\label{3.1}
\rho_1(T_Lf^*)\leq C \rho_2(f^*), \quad f \in M_+(\R_+),
\end{align}
in which $L=L(t,s), \quad s,t \in \R_+$ is the so-called iterated rearrangement of $K(x,y)$, which rearrangement is nonincreasing in each of $s$ and $t$.

Using the concept of the down dual of an Orlicz-Lorentz  gauge functional the inequality (2) is reduced to a gauge functional inequalitiy for general functions in Theorem 2.4. Sufficient conditions for stronger \textit{integral } inequalities like
\begin{align}\label{4.1}
\Phi_1^{-1}\left(\int_{\R_+}\Phi_1(c w(x)(Tf)(x))t(x)\, dx\right)\leq \Phi_2^{-1}\left(\int_{\R_+}\Phi_2(u(y)f(y))v(y)\, dy\right),
\end{align}
with $c>0$ independent of $f\in M_+(\R_+)$ then serve  for (2) and hence for (1).

The integral inequality (\ref{4.1}) is the same as the corresponding gauge functional inequality
\[
\rho_{\Phi_1,t}(w Tf)\leq C \rho_{\Phi_2,v}(u f),
\]
when $\Phi_1(s)=s^q$ and $\Phi_2(s)=s^p, \quad 1<p\leq q < \infty$. The necessary and sufficient conditions are spelled out in this case.



The operators $T_K$ are treated in Section 2. One can prove similar theorems for such operators on $M_+(\R^n)$. We have preferred to work in the context of one dimension where we believe the results are simpler and more elegant.

Section 3 presents examples either illustrating our results or comparing them to previous work. As well we consider integral operators with kernels that are homogeneous of degree $-1$.

In Section 4 a brief historical sketch of our subject is given.

Finally, we remark that results for integral operators with general kernel $K \in M(\R_+)$ can be obtained from ours using the fact that such a $K$ is the difference of two positive kernels, namely, $K=K_+-K_-$, where $K_+(x,y)=\max[K(x,y),0]$ and $K_-(x,y)=\max[-K(x,y), 0]$.

\section{Positive integral operators on $M_+(\R_+)$}

As a first step in our study of (\ref{1})  we focus on the related inequality
\begin{align}\label{9}
\rho_1(T_Kf^*)\leq C \rho_2(f^*), \quad f \in M_+(\R_+).
\end{align}

\begin{theorem}\label{Th6}
Fix $K \in M_+(\R_+^2)$ and let $\Phi_1$ and $\Phi_2$ be $N$-functions, with $\Phi_2(2t)\approx \Phi_2(t)$, $t \gg 1$. Given weight functions $u_1, u_2 \in M_+(\R_+), \, \int_{\R_+}u_2=\infty$, one has (\ref{9}) for $\rho_i=\rho_{\Phi_i, u_i}, \, i=1,2$, if
\begin{align}\label{10}
\rho_{\Psi_2, u_2}(Sg/U_2)\leq C \rho_{\Psi_1,u_1}(g/u_1), \quad g \in M_+(\R+),
\end{align}
\begin{align*}
& (Sg)(x)=\int_0^x(T_K'g)(y)\,dy=\int_0^x\int_0^{\infty}K(z,y)g(z)\, dzdy,\\
& U_2(x)=\int_0^xu_2, \quad \textit{and} \quad \Psi_i(t)=\int_0^t\phi_i^{-1}, \quad i=1,2.
\end{align*}

The functions $\Psi_i$ are the so-called complementary $N$-functions of the $N$-functions $\Phi_i, \, i=1,2$.
\end{theorem}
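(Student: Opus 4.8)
The plan is to pass to the associate (Köthe dual) description of $\rho_1$ and then to exploit that $f^*$ is nonincreasing through the \emph{down dual} of $\rho_2$, as advertised in the Introduction. First I would record the norm-recovery formula for the gauge functional $\rho_1=\rho_{\Phi_1,u_1}$,
\[
\rho_{\Phi_1,u_1}(F)\approx\sup\Big\{\int_{\R_+}F(x)g(x)\,dx:\ \rho_{\Psi_1,u_1}(g/u_1)\le 1\Big\},
\]
valid for every $F\in M_+(\R_+)$, the constant being harmless since it is absorbed into $C$; the constraint is exactly the dual unit ball because $\Psi_1$ is the complementary $N$-function of $\Phi_1$. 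Applying this with $F=T_Kf^*$ and using Fubini to transpose $T_K$, the quantity inside the supremum becomes
\[
\int_{\R_+}(T_Kf^*)(x)g(x)\,dx=\int_{\R_+}f^*(y)(T_K'g)(y)\,dy,
\]
so that it suffices to bound $\int_{\R_+}f^*(T_K'g)$ by $C\rho_2(f^*)$ for every admissible $g$.

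For the second step I would use that $f^*$ is nonincreasing. Writing $w=T_K'g$ and $W(x)=\int_0^x w=(Sg)(x)$, the Hölder inequality for the down dual of $\rho_2=\rho_{\Phi_2,u_2}$ gives, for nonincreasing $f^*$,
\[
\int_{\R_+}f^*(y)w(y)\,dy\le\rho_2(f^*)\,\rho_2^{\prime\downarrow}(w),
\]
where $\rho_2^{\prime\downarrow}(w)=\sup\{\int_{\R_+}hw:\ h\ \text{nonincreasing},\ \rho_2(h)\le1\}$ is the down dual evaluated at $w$. The heart of the argument is the estimate
\[
\rho_2^{\prime\downarrow}(w)\le C\,\rho_{\Psi_2,u_2}\Big(\tfrac{1}{U_2}\textstyle\int_0^\cdot w\Big)=C\,\rho_{\Psi_2,u_2}(Sg/U_2),
\]
i.e. that the down dual of the Orlicz--Lorentz gauge is dominated by the complementary gauge of the $U_2$-averaged function. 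Once this is in hand, hypothesis (\ref{10}) yields $\rho_{\Psi_2,u_2}(Sg/U_2)\le C\rho_{\Psi_1,u_1}(g/u_1)$, and combining the three displays and taking the supremum over admissible $g$ produces (\ref{9}).

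The main obstacle is the down-dual estimate in the last display. I would first establish it for a single building block $f^*=\chi_{(0,a)}$: there $\int f^*w=W(a)$, and since $W=\int_0^\cdot w$ is nondecreasing one has $\tfrac{1}{U_2(x)}\int_0^x w\ge W(a)\,\chi_{(a,\infty)}(x)/U_2(x)$, whence $\rho_{\Psi_2,u_2}(Sg/U_2)\ge W(a)\,\rho_{\Psi_2,u_2}(\chi_{(a,\infty)}/U_2)$; the change of variable $s=U_2(x)$ (legitimate precisely because $\int_{\R_+}u_2=\infty$, so $U_2$ maps $\R_+$ onto $\R_+$) together with the fundamental-function identity $\Phi_2^{-1}(r)\Psi_2^{-1}(r)\approx r$ shows $\rho_{\Psi_2,u_2}(\chi_{(a,\infty)}/U_2)\gtrsim\Phi_2^{-1}(1/U_2(a))\approx 1/\rho_{\Phi_2,u_2}(\chi_{(0,a)})$, which is exactly the required bound for characteristic functions. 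The remaining --- and genuinely delicate --- point is the passage from these initial-interval indicators to an arbitrary nonincreasing $f^*$; here I would decompose $f^*$ along its level sets into a superposition of such indicators and reassemble the gauge estimates, and it is at this aggregation that the doubling hypothesis $\Phi_2(2t)\approx\Phi_2(t)$ is needed, since it makes $\rho_2$ comparable to the discrete sum of the fundamental-function contributions of the layers. I expect verifying this aggregation --- equivalently, that the simple averaged form $\rho_{\Psi_2,u_2}(\,\cdot/U_2)$ really controls the full down dual and not merely its action on indicators --- to be the step requiring the most care.
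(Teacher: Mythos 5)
Your argument is essentially the paper's own proof: duality for $\rho_{\Phi_1,u_1}$ (together with $\rho_1'(g)=\rho_{\Psi_1,u_1}(g/u_1)$), transposition of $T_K$ via Fubini, H\"older for the down dual of $\rho_2$ on the cone of nonincreasing functions, and the identification $(\rho_2')_d(h)=\rho_{\Psi_2,u_2}\bigl(\int_0^x h/U_2(x)\bigr)$, after which hypothesis (\ref{10}) closes the loop. The one step you flag as genuinely delicate --- that $\rho_{\Psi_2,u_2}(\,\cdot/U_2)$ controls the full down dual and not merely its action on indicators --- is precisely what the paper imports by citation ([5, Theorem 6.2]), under exactly the hypotheses $\Phi_2(2t)\approx\Phi_2(t)$ and $\int_{\R_+}u_2=\infty$ that you correctly identified as the point where they are needed.
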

\begin{proof}
  The identity
  \[
  \int_{\R_+}gT_Kf^*=\int_{\R_+}f^*T'_{K}g
  \]
  readily yields that (\ref{9}) holds if and only if
  \[
  (\rho_2')_d(T_K'g)\leq C \rho_1'(g),
  \]
  where
  \begin{align}\label{11}
  \rho'_1(g)=\rho_{\Psi_1, u_1}(g/u_1)
  \end{align}
  and
  \begin{align}\label{12}
  (\rho'_2)_d(h)=\rho_{\Psi_2, u_2}\left(\int_0^xh/U_2(x)\right), \quad h \in M_+(\R_+).
  \end{align}
  For (\ref{12}), see [3, Theorem 6.2]; (\ref{11}) is straightforward. The proof is complete on taking $h=T'_K(g)$.
\end{proof}

To replace $T_Kf^*$ in (\ref{9}) by $(T_Kf)^*$ we will require
\[
\rho_1\left(t^{-1}\int_0^tf^*\right)\leq C\rho_1(f^*), \quad f \in M_+(\R_+).
\]

For $\rho_1=\rho_{\Phi,u}$ Theorem 2.1 allows one to reduce this inequality to
\[
\rho_{\Psi,u}\left(\int_0^xg/U(x)\right)\leq C \rho_{\Psi, u}(g/u), \quad g \in M_+(\R_+)
\]
and another such inequality; here $\Psi(x)=\int_0^x\phi^{-1}$. As mentioned in the introduction, such gauge functional inequalities are implied by integral inequalities of the form (3). We combine theorems 1.7 and 4.1 from Bloom-Kerman [1] to obtain the next theorem.

\begin{theorem}\label{TH2}
Consider $K(x,y)\in M_+(\R_+^2)$, which, for fixed $y \in \R_+$, increases in $x$ and, for fixed $x\in \R_+$, decreases in $y$, and which satisfies the growth condition
\begin{align}\label{5}
K(x,y)\leq K(x,z)+K(z,y), \quad 0<y<z<x.
\end{align}

Let $t,u,v$ and $w$ be nonnegative, measurable (weight) functions on $\R_+$ and suppose $\Phi_1$ and $\Phi_2$ are $N$-functions having complementary functions $\Psi_1$ and $\Psi_2$, respectively, with $\Phi_1 \circ \Phi_2^{-1}$ convex. Then, there exists $c>0$ such that
\begin{align}\label{6}
\Phi_1^{-1}\left(\int_{\R_+}\Phi_1(cw(x)(T_Kf)(x))t(x)\, dx\right)\leq \Phi_2^{-1}\left(\int_{\R_+}\Phi_2(u(y)f(y))v(y)\, dy\right)
\end{align}
for all $f\in M_+(\R_+)$, if and only if there is a $c>0$, independent of $\lambda, x>0$, with
\begin{align}\label{7}
&\int_0^x\frac{K(x,y)}{u(y)}\phi_2^{-1}\left(\frac{c\alpha(\lambda, x)K(x,y)}{\lambda u(y)v(y)}\right)\, dy\leq c^{-1}\lambda\nonumber\\
&\textit{and}\\
&\int_0^x\frac{1}{u(y)}\phi_2^{-1}\left(\frac{c \beta(\lambda, x)}{\lambda u(y)v(y)}\right)\, dy\leq c^{-1}\lambda\nonumber,
\end{align}
where
\[
\alpha(\lambda, x)=\Phi_2 \circ \Phi_1^{-1}\left(\int_x^{\infty}\Phi_1(\lambda w(y))t(y)\, dy\right)
\]and
\[
\beta(\lambda,x)=\Phi_2 \circ \Phi_1^{-1}\left(\int_x^{\infty}\Phi_1(\lambda w(y)K(y,x))t(y)\, dy\right).
\]
In the case $K(x,y)=\chi_{(0,x)}(y)$ only the first of the conditions in (\ref{7}) is required.
\end{theorem}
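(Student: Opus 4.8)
The plan is to read the hypotheses on $K$ as placing $T_K$ in the Oinarov class treated by Bloom-Kerman [1], and then to assemble the characterization from their Theorems 1.7 and 4.1 rather than build it from scratch. The two conditions in (\ref{7}) should be understood as the Orlicz-modular analogues of the two classical conditions that characterize weighted boundedness of Hardy-type operators with Oinarov kernels. Heuristically they come from two families of test functions: condition one, which carries $K(x,y)$ inside the inner integral and $w(y)$ (no kernel) inside $\alpha(\lambda,x)$, is extracted from $f$ spread over $(0,x)$, whereas condition two, which carries no kernel inside the inner integral and $w(y)K(y,x)$ inside $\beta(\lambda,x)$, is extracted from $f$ concentrated near the upper endpoint $x$.

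For sufficiency I would fix a breakpoint $x$ and use the growth inequality (\ref{5}): for $0<y<z<x$ one has $K(x,y)\le K(x,z)+K(z,y)$, which, on blocks adapted to $t$ and $\Phi_1$, lets one dominate the contribution of a general $f$ by a ``spread'' part in which the kernel is essentially frozen in its first argument and a ``concentrated'' part in which the $x$-dependence is transferred into the output weight through $K(y,x)$. This is the modular transcription of Oinarov's discretization, and it is Theorem 4.1 of Bloom-Kerman that performs exactly this reduction, replacing (\ref{6}) by a pair of genuine Hardy-type modular inequalities. To each of these I would apply Theorem 1.7 of Bloom-Kerman, whose pointwise criterion introduces the complementary $N$-function via $\phi_2^{-1}$ in the inner integrand and the output tail via the composition $\Phi_2\circ\Phi_1^{-1}$ of an integral over $(x,\infty)$; matching the weights in the two reductions then produces precisely $\alpha(\lambda,x)$ in the first condition and $\beta(\lambda,x)$ in the second. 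The convexity of $\Phi_1\circ\Phi_2^{-1}$ is the hypothesis that makes the necessity and sufficiency passages simultaneously valid, serving here the role that $p\le q$ plays for power functions.

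For necessity I would run the testing argument directly in (\ref{6}). Choosing $f$ supported on $(0,x)$ and using that $K$ increases in its first argument, for $x'>x$ the output dominates the $x'$-independent quantity $I_x:=\int_0^xK(x,y)f(y)\,dy$; hence $\int_x^\infty\Phi_1(cw(x')I_x)\,t(x')\,dx'$ lies below the left side of (\ref{6}), and this is what produces the factor $\alpha(\lambda,x)$ once the profile of $f$ is optimized through $\phi_2^{-1}$, giving the first condition. Concentrating $f$ near $x$ instead makes $T_Kf(x')\approx K(x',x)\cdot(\text{mass})$ for $x'>x$, and the same optimization yields the second condition with $\beta(\lambda,x)$.

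Finally, for $K=\chi_{(0,x)}$ one has $K(x,y)=1$ for $0<y<x$, so the inner factor $K(x,y)$ disappears from the first condition and $\alpha$ loses its kernel, while for $y>x$ one has $K(y,x)=\chi_{(0,y)}(x)=1$, whence $\beta(\lambda,x)=\alpha(\lambda,x)$; the two conditions therefore coincide. The step I expect to be the genuine obstacle is not any individual estimate but the faithful modular transcription of Oinarov's two-condition discretization: verifying that the block decomposition loses nothing under the nonlinear $N$-functions, so that $\Phi_2\circ\Phi_1^{-1}$ and $\phi_2^{-1}$ appear in $\alpha$, $\beta$ and the inner integrands with the correct arguments and with all constants absorbable into the single $c$.
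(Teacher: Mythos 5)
Your proposal takes essentially the same route as the paper: the paper offers no proof of this theorem at all, stating only that it is obtained by combining Theorems 1.7 and 4.1 of Bloom--Kerman \cite{BK}, which is precisely the reduction you invoke. Your additional heuristics (the two-test-function origin of the conditions, the role of convexity of $\Phi_1\circ\Phi_2^{-1}$ as the analogue of $p\le q$, and the verification that $\beta=\alpha$ when $K=\chi_{(0,x)}$) are consistent with that source and go beyond what the paper records.
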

\begin{remark} The integral inequality (\ref{6}) with the kernel $K$ of Theorem \ref{TH2} replaced by any $K \in M_+(\R_+^{2m})$ implies the norm inequality
\begin{align}\label{8}
\rho_{\Phi_1,t}(wT_Kf)\leq C \rho_{\Phi_2,v}(uf), \quad f \in M_+(\R_+^n), C>1.
\end{align}
Thus, in the generalization of (\ref{6}), replace $f$ by $\dfrac{f}{C \rho_{\Phi_2,v}(uf)}, \, \dfrac 1C\leq c$, and suppose, without loss of generality, that $\Phi_i(1)=1, \, i=1,2$. Since $\int_{\R_+}\Phi_{2}\left(\dfrac{uf}{\rho_{\Phi_{2,v}}(uf)}\right)v\leq 1,$ we get
\[
\int_{\R_+}\Phi_1\left(\frac{wT_Kf}{C \rho_{\Phi_2, v}(uf)}\right)t\leq 1,
\]
whence (\ref{8}) holds.
\end{remark}

To replace $T_Kf^*$ in (4) by $(T_Kf)^*$ we will require
\[
\rho_1\left(t^{-1}\int_0^tf^*\right)\leq C \rho_1(f^*), \quad f \in M_+(\R_+).
\]
Conditions sufficient for the inequality to hold are given in
\begin{theorem}\label{Th7}
Let $\Phi$ be an $N$-function satisfying $\Phi(2t)\approx \Phi(t), \, t\gg 1$, and suppose $u$ is weight on $\R_+$ with $\int_{\R_+}u=\infty$. Then,
\begin{align*}
\rho_{\Phi,u}\left(t^{-1}\int_0^tf^*\right)\leq C \rho_{\Phi,u}(f^*), \quad f \in M_+(\R_+),
\end{align*}
provided
\begin{align}\label{13}
\phi\left(\frac{c \alpha(\lambda,x)}{\lambda}\right)U(x)\leq c^{-1}\lambda,
\end{align}
with $c>0$ independent of $\lambda, x \in \R_+$, in which
\begin{align}\label{14}
&\alpha(\lambda,x)=\int_x^{\infty}\Phi(\lambda/U(y))u(y)\, dy\nonumber\\
&\textit{and}\nonumber\\
&\int_0^x \phi^{-1}\left(\frac{c\beta(\lambda,x)}{\lambda}\frac{y}{U(y)}\right)\frac{yu(y)}{U(y)}\, dy \leq c^{-1} \lambda,
\end{align}
with $c>0$ independent of $\lambda,x \in \R_+$, in which
\[
\beta(\lambda,x)=\int_x^{\infty}\Phi(\lambda/y)u(y)\, dy.
\]
\end{theorem}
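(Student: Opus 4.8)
The plan is to combine the duality of Theorem~\ref{Th6} with the Hardy-type criterion of Theorem~\ref{TH2}, after splitting the relevant operator into two pieces. First I would write $t^{-1}\int_0^t(\cdot)$ as $T_K$ with $K(t,s)=t^{-1}\chi_{(0,t)}(s)$ and apply Theorem~\ref{Th6} with $\rho_1=\rho_2=\rho_{\Phi,u}$; the hypotheses $\Phi(2t)\approx\Phi(t)$ and $\int_{\R_+}u=\infty$ are exactly what that theorem requires, and the asserted inequality is reduced to $\rho_{\Psi,u}(Sg/U)\le C\,\rho_{\Psi,u}(g/u)$, where $\Psi$ is complementary to $\Phi$ and $U(x)=\int_0^x u$. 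Here $(T_K'g)(y)=\int_y^\infty z^{-1}g(z)\,dz$, so a Fubini computation gives $(Sg)(x)=\int_0^x g+x\int_x^\infty z^{-1}g(z)\,dz$. Since $\rho_{\Psi,u}$ is a norm, it then suffices to establish the two inequalities
\[
\text{(I)}\quad \rho_{\Psi,u}\Big(\tfrac{1}{U(x)}\textstyle\int_0^x g\Big)\le C\,\rho_{\Psi,u}(g/u),\qquad
\text{(II)}\quad \rho_{\Psi,u}\Big(\tfrac{x}{U(x)}\textstyle\int_x^\infty z^{-1}g(z)\,dz\Big)\le C\,\rho_{\Psi,u}(g/u).
\]

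For (I) the kernel is $\chi_{(0,x)}(y)$, which meets the monotonicity and growth hypotheses of Theorem~\ref{TH2} (and is the special case singled out there, for which the two conditions of~(\ref{7}) coincide). I would apply that theorem in the complementary setting $\Phi_1=\Phi_2=\Psi$, so that $\Phi_1\circ\Phi_2^{-1}$ is the identity, with $w=1/U$, $t=v=u$ and the factor $1/u$ on $g$. Because the products $u\cdot v$ then collapse and $\phi_2^{-1}=\phi$, the criterion simplifies to the single pointwise bound $\phi\big(c\alpha(\lambda,x)/\lambda\big)U(x)\le c^{-1}\lambda$, that is, to~(\ref{13}), the level factor being $\alpha(\lambda,x)=\int_x^\infty\Psi(\lambda/U(y))\,u(y)\,dy$ formed from the N-function $\Psi$ that governs the reduced inequality. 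The Remark following Theorem~\ref{TH2} upgrades the resulting integral inequality to the gauge inequality (I).

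For (II) the operator is the transpose of the averaging, with kernel $z^{-1}\chi_{(x,\infty)}(z)$, which \emph{decreases} in its first variable and so lies outside the scope of Theorem~\ref{TH2}. My plan is to dualize a second time: by the principle of duality together with the associate-norm identity~(\ref{11}), (II) is equivalent to a weighted Hardy inequality governed by the \emph{original} N-function $\Phi$, namely $\rho_{\Phi,u}\big(z^{-1}\int_0^z G\big)\le C\,\rho_{\Phi,u}\big(\tfrac{U}{xu}\,G\big)$ once the internal weight $xu(x)/U(x)$ has been absorbed into $G$. Applying Theorem~\ref{TH2} to this with $\Phi_1=\Phi_2=\Phi$, kernel $\chi_{(0,x)}$, $w=1/x$, $t=u$, $v=u$ and right-hand weight $u_\ast=\tfrac{U}{xu}$, and reading off $1/u_\ast=yu(y)/U(y)$, $1/(u_\ast v)=y/U(y)$ and $\phi_2^{-1}=\phi^{-1}$, the single resulting condition is precisely the second inequality of~(\ref{14}), with $\beta(\lambda,x)=\int_x^\infty\Phi(\lambda/y)\,u(y)\,dy$.

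Combining (I) and (II) by subadditivity, and then undoing the first duality through Theorem~\ref{Th6}, yields the stated inequality. I expect the crux to be step~(II): correctly transposing the co-Hardy operator, recognizing that this places one back in the $\Phi$-setting (rather than the $\Psi$-setting of piece~(I)), and carrying the weights $x/U(x)$ and $1/z$ through the change of variable so that the somewhat intricate integrand of~(\ref{14}), with its factors $y/U(y)$ and $yu(y)/U(y)$, emerges exactly. A minor point to verify is that the triangle-inequality splitting of $Sg/U$ costs only the constant $1$, which is immediate since $\rho_{\Psi,u}$ is a genuine (Luxemburg) norm.
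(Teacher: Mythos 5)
Your proposal follows the paper's proof essentially verbatim: apply Theorem~\ref{Th6} with $K(x,y)=\chi_{(0,x)}(y)/x$ and $\Phi_1=\Phi_2=\Phi$, $u_1=u_2=u$, split $(Sg)(x)$ into $\int_0^x g$ and $x\int_x^\infty g(y)\,dy/y$, handle the first piece by Theorem~\ref{TH2} in the complementary $\Psi$-setting to get the condition (\ref{13}), and handle the second by dualizing back to a $\Phi$-modular inequality for the averaging operator $x^{-1}\int_0^x$, whose Theorem~\ref{TH2} criterion is exactly the second condition of (\ref{14}). The only divergence is that your computation yields $\alpha(\lambda,x)=\int_x^\infty\Psi(\lambda/U(y))u(y)\,dy$ where the statement writes $\Phi$; your version is what Theorem~\ref{TH2} actually produces for the $\Psi$-modular inequality, so this looks like a typo in the statement rather than a gap in your argument.
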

\begin{proof}
In Theorem 2.2, take $K(x,y)=\chi_{(0,x)}(y)/x, \quad \Phi_1=\Phi_2=\Phi$ and $u_1=u_2=u$ to get
\[
(Sg)(x)=\int_0^x\int_y^{\infty}g(z)\frac{dz}{z}=\int_0^xg+x\int_x^{\infty}g(y)\frac{dy}{y},
\]
whence (\ref{10}) reduces to
\begin{align}\label{15}
&\rho_{\Psi,u}\left(\int_0^xg/U(x)\right)\leq C \rho_{\Psi,u}(g/u)\nonumber\\
&\textit{and}\\
&\rho_{\Psi,u}\left(x\int_x^{\infty}g(y)\frac{dy}{y}/U(x)\right)\leq C\rho_{\Psi,u}(g/u)\nonumber.
\end{align}
The condition in (12) is a consequence of the modular inequality
\[
\int_{\R_+}\Psi\left(c\int_0^xg/U(x)\right)u \leq \int_{\R}\Psi(g/u)u,
\]
which, according [1, Theorem 4.1] holds if the first inequality in (\ref{14}) does.

Again, by duality, the condition in (13) holds when the modular inequality
\[
\int_{\R_+}\Phi\left(c \frac 1x \int_0^xg\right)u\leq \int_{\R_+}\Phi\left(g(y)\frac{U(y)}{yu(u)}\right)u(y)\, dy
\]
does, which inequality holds if and only if one has the second condition in (\ref{14}).
\end{proof}

In Theorem \ref{Th8} below we show the boundedness of $T_Kf$ depends on that of $T_Lf^*$, where the kernel $L$ is the iterated rearrangement of $K$ considered in \cite{B}. Thus, for each $x \in \R_+$, we rearrange the function $k_x(y)=K(x,y)$ with respect to $y$ to get $(k^*_x)(s)=K^{*_2}(x,s)=k_s(x)$ and then rearrange the function of $x$ so obtained to arrive at $(K^{*_2})^{*_1}(t,s)=(k_s^*)(t)=L(t,s)$. It is clear from its construction that $K(t,s)$ is nonincreasing in each of $s$ and $t$.

\begin{theorem}\label{Th8}
Consider $K \in M_+(\R_+^2)$ and set $L(t,s)=(K^{*_2})^{*_1}(t,s), \quad s,t \in \R_+$. Suppose $\Phi_1$ and $\Phi_2$ are $N$-functions, with $\Phi_1(2t)\approx \Phi_1(t),\quad t\gg 1$, and let $u_1$ and $u_2$ be weight functions, with $\int_{\R_+}u_1=\infty$. Then, given the conditions (12) and (13) for $\Phi=\Phi_1$ and $u=u_1$ one has
\[
\rho_{\Phi_1,u_1}((T_Kf)^*)\leq C \rho_{\Phi_2,u_2}(f^*),
\]
provided
\[
\rho_{\Phi_1, u_1}(T_Lf^*)\leq C \rho_{\Phi_2,u_2}(f^*), \quad f \in M_+(\R_+).
\]
\end{theorem}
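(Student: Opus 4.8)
The plan is to pass from $(T_Kf)^*$ to the Hardy average of the already-rearranged quantity $T_Lf^*$, absorb that average by means of Theorem~\ref{Th7}, and then invoke the hypothesis on $T_L$. The device that makes this work is the rearrangement majorization
\[
(T_Kf)^{**}(t)\leq (T_Lf^*)^{**}(t),\quad t\in\R_+,
\]
where $h^{**}(t)=t^{-1}\int_0^th^*$. This is the raison d'\^etre of the iterated rearrangement $L=(K^{*_2})^{*_1}$ of \cite{B}: replacing $K(x,y)$ by its decreasing rearrangement in the second variable and then in the first produces an $L(t,s)$ that is nonincreasing in each of $s$ and $t$. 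In particular $L(t,s)$ decreases in $t$, and since $f^*$ is nonincreasing, $T_Lf^*$ is itself nonincreasing on $\R_+$; hence $(T_Lf^*)^*=T_Lf^*$ and $(T_Lf^*)^{**}(t)=t^{-1}\int_0^t T_Lf^*$.

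Granting the majorization, the chain is routine. Because $(T_Kf)^*\leq (T_Kf)^{**}$ pointwise, it gives
\[
(T_Kf)^*(t)\leq t^{-1}\int_0^t (T_Lf^*)(s)\,ds,\quad t\in\R_+.
\]
The gauge functional $\rho_{\Phi_1,u_1}$ is monotone with respect to the pointwise a.e.\ order, so
\[
\rho_{\Phi_1,u_1}((T_Kf)^*)\leq \rho_{\Phi_1,u_1}\left(t^{-1}\int_0^t T_Lf^*\right).
\]
Applying Theorem~\ref{Th7} to the nonincreasing function $T_Lf^*$ (legitimate precisely because conditions (14) and (15) are assumed for $\Phi_1$ and $u_1$) yields $\rho_{\Phi_1,u_1}(t^{-1}\int_0^t T_Lf^*)\leq C\,\rho_{\Phi_1,u_1}(T_Lf^*)$, and the hypothesis $\rho_{\Phi_1,u_1}(T_Lf^*)\leq C\rho_{\Phi_2,u_2}(f^*)$ then closes the estimate after absorbing the two constants into one. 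Note that this route needs no structural assumptions on $K$ itself (in contrast to Theorem~\ref{TH2}); the monotonicity of the kernel is manufactured by passing to $L$.

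The hard part will be the first step, the majorization $(T_Kf)^{**}\leq(T_Lf^*)^{**}$: everything afterward is monotonicity together with the two quoted theorems, whereas this inequality is where the geometry of the iterated rearrangement genuinely enters. To argue it directly I would fix a set $E$ with $|E|=t$, write
\[
\int_E T_Kf=\int_{\R_+}\left(\int_E K(x,y)\,dx\right)f(y)\,dy,
\]
estimate by the Hardy--Littlewood inequality in the $y$-variable, and then show that the supremum over such $E$ of the resulting kernel averages is controlled by the corresponding averages built from $L$; the two successive rearrangements defining $L$ are exactly calibrated to make this layer-cake bound hold for an arbitrary $K\in M_+(\R_+^2)$. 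Rather than reprove this, I would cite \cite{B} for the majorization and check only that its hypotheses impose no monotonicity on $K$, so that it applies to the general kernel in the statement.
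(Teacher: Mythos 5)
Your proposal is correct and follows essentially the same route as the paper: the paper proves the key majorization $(T_Kf)^{**}\leq (T_Lf^*)^{**}$ exactly as you sketch (fix $E$ with $|E|=t$, apply Hardy--Littlewood in $y$ to get $K^{*_2}$ and $f^*$, then in $x$ against $\chi_E$ to get $L$), and then runs the same chain $\rho_{\Phi_1,u_1}((T_Kf)^*)\leq\rho_{\Phi_1,u_1}((T_Kf)^{**})\leq\rho_{\Phi_1,u_1}((T_Lf^*)^{**})\leq C\rho_{\Phi_1,u_1}(T_Lf^*)$ using Theorem~\ref{Th7} under conditions (14)--(15). The only difference is that the paper writes out the two-line Hardy--Littlewood computation rather than citing \cite{B} for it.
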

\begin{proof}
We claim
\begin{align}\label{16}
(T_Kf)^{**}(t)\leq (T_Lf^*)^{**}(t), \quad t \in \R_+,
\end{align}
in which, say, $(T_Kf)^{**}(t)=t^{-1}\int_0^t(T_Kf)^*$.

Indeed, given $E \subset \R_+, \quad |E|=t$,
\begin{align*}
\int_ET_Kf&\leq \int_E\int_{\R_+}K^{*_2}(x,s)f^*(s)\,ds\\
&=\int_{\R_+}f^*(s)\, ds\int_{\R_+}\chi_E(x)K^{*_2}(x,s)\, dx\\
&\leq \int_{\R_+}f^*(s)\, ds \int_0^tL(u,s)\, du\\
&=\int_0^t(T_Lf^*)(u)\,du.
\end{align*}
Taking the supremum over all such $E,$ then dividing by $t$ yields (\ref{16}).

Next, the inequality
\[
\rho_{\Phi_1,u_1}((T_Kf)^{**})\leq C \rho_{\Phi_1, u_1}((T_Lf^*)^{**})
\]
is equivalent to
\[
\rho_{\Phi_1, u_1}((T_Kf)^*)\leq C \rho_{\Phi_1, u_1}(T_Lf^*),
\]
given (\ref{14}) for $\Phi=\Phi_1$ and $u=u_1$. For, in that case,
\begin{align*}
\rho_{\Phi_1,u_1}((T_Kf)^*)&\leq \rho_{\Phi_1,u_1}((T_Kf)^{**})\\
&\leq \rho_{\Phi_1,u_1}((T_Lf^*)^{**})\\
&\leq C \rho_{\Phi_1,u}(T_Lf^*).
\end{align*}
The assertion of the theorem now follows.
\end{proof}

\begin{theorem}\label{Th9}
Let $K, L, \Phi_1, \Phi_2, u_1$ and $u_2$ be as in theorem \ref{Th8}. Assume, in addition, that $\Phi_2(2t)\approx\Phi_2(t), \quad t \gg 1,$ $\int_{\R_+}u_2=\infty$ and that conditions (12) and (13) hold for $\Phi=\Phi_1, \, u=u_1$. Then,
\[
\rho_{\Phi_1,u_1}((T_Kf)^*)\leq C \rho_{\Phi_2, u_2}(f^*)
\]
provided
\begin{align}\label{17}
&\rho_{\Psi_2, u_2}(Hg/U_2)\leq C \rho_{\Psi_1, u_1}(g/u_1)\nonumber\\
&\textit{and}\\
&\rho_{\Psi_2, {u}_2}(\bar{M}/U_2 I'g)\leq C \rho_{\Psi_1,{u}_1}({g}/{u}_1), \quad  g  \in M_+(\R_+)\nonumber,
\end{align}
with
\[
(Hg)(x)=\int_0^xM(x,y)g(y)\, dy, \quad (I'g)(x)=\int_x^{\infty}g \quad \textit{and} \quad \bar{M}(x)=\int_0^xL(z,x)\, dz.
\]
\end{theorem}
\begin{proof}
In view of Theorem \ref{Th8} , we need only verify the inequalities (2.1) imply
\begin{align}\label{18}
\rho_{\Phi_1,u_1}(T_Lf^*)\leq C \rho_{\Phi_2, u_2}(f^*), \quad f \in M_+(\R_+).
\end{align}
Now, according to Theorem \ref{Th6}, (\ref{18}) will hold if one has (2.1) with $K=L$. Now,
\begin{align*}
(Sg)(x)&=\int_0^x T'_Lg\\
&=\int_0^{\infty}\int_0^xL(z,y)g(y)\, dy\\
&=\left(\int_0^x+\int_x^{\infty}\right)\int_0^x L(z,y)\, dz g(y)\, dy\\
&=(Hg)(x)+\int_x^{\infty}\int_0^x L(z,y)\, dz g(y)\, dy.
\end{align*}
But, $L(z,y)$ decreases in $y$ for a given $z$, so
\[
\int_x^{\infty}\int_0^xL(z,y)\, dz g(y)\, dy\leq \int_x^{\infty}\int_0^xL(z,x)\, dz f(y)\, dy=\bar{M}(x)(I'f)(x).
\]
Hence, (\ref{17}) implies (\ref{18}).
\end{proof}

We have to this point shown that the inequality (\ref{1}) holds for $\rho_i=\rho_{\Phi_i, u_i, \quad i=1,2}$, whenever the inequalities (\ref{17}) hold. In Theorem \ref{Th10} below we give four conditions which, together with (12) and (13) for $\Phi_1$ and $\Phi_2$, guarantee (\ref{17}).

 The kernel $M(x,y)$ of the operator $H$ is increasing in $x$ for a given $y$ and decreasing in $y$ for a given $x$. Such an operator $H$ is a so-called generalized Hardy operator (GHO) if $M$ satisfies the growth condition
 \begin{align}\label{2.15}
 M(x,y)\leq M(x,z)=M(z,y), \quad y<z<x.
 \end{align}
 This condition is not guaranteed to hold. It has to be assumed in Theorem 2.7 below so that we may apply Theorem 2.2. Theorem 3.1 in the next section gives a class of kernels satisfying (\ref{2.15}).


\begin{theorem}\label{Th10}
Let $K, L, \Phi_1, \Phi_2, u_1, u_2, M, H$ and $\bar{M}$ be as in Theorem \ref{Th9}. Assume, in addition, that $\Phi_1\circ \Phi_2^{-1}$  is convex, $\Phi_1(1)=\Phi_2(1)=1$ and that $M$ satisfies the growth condition (2.15). Then, one has
\begin{align}\label{21}
\rho_{\Phi_1,u_1}((T_Kf)^*)\leq C \rho_{\Phi_2,u_2}(f^*),
\end{align}
with $C>0$ independent of $f \in M_+(\R_+)$, provided there exists $c>0$ independent of $\lambda, x>0$, such that, 
\begin{align}
&\int_0^x M(x,y)u_2(y)\phi_1\left(\frac{c\alpha(\lambda, x)\bar{M}(y)}{\lambda} \right)\, dy\leq c^{-1}\lambda,\notag\\
&\phi_1\left(\frac{c\beta(\lambda,x)}{\lambda}\right)\int_0^xu_2(y)\,dy\leq c^{-1} \lambda,\notag\\
&\textit{and}\\
&\int_0^x \frac{\bar{M}(y) u_2(y)}{U_2(y)}\Phi_2^{-1}\left( \frac{c\gamma(\lambda,x)\bar{M}(y)}{ U_2(y)}\right)\, dy\leq c^{-1}\lambda\notag;
\end{align}
here,
\begin{align*}
&\alpha(\lambda,x)=\Psi_1 \circ \Psi_2^{-1}\left(\int_x^{\infty}\Psi_2\left(\frac{\lambda}{U_2(y)}\right)\right)u_2(y)\,dy,\notag\\
&\beta(\lambda, x)=\Psi_1 \circ \Psi_2^{-1} \int_x^{\infty} \Psi_2\left(\lambda\frac{\bar{M}(x,y)}{U_2(y)}\right)u_2(y)\,dy\\
&\textit{and}\\
&\gamma(\lambda,x)=\Phi_2\circ\Phi_2^{-1}\left(\int_x^{\infty}\Phi_1(\lambda) u_2(y)\, dy\right).\notag
\end{align*}
\end{theorem}
\begin{proof}
Theorem 2.5 tells that (2.16) holds if one has (\ref{18}). Theorem 2.6 reduces (\ref{18}) to the inequalities (\ref{17}) or, equivalently, to the inequalities 
\begin{align}\label{181}
&\rho_{\Psi_2, u_2}(Hg/u2)\leq C \rho_{\Psi_1,u_1}(g/u_1)\notag \\
&\textit{and}\\
&\rho_{\Phi_1,u_1}(If)\leq C \rho_{\Phi_2, u_2}(fU_2/(\bar{M}u_2)), \quad (If)(x)=\int_0^xf\notag.
\end{align}

Remark 2.3 asserts the inequalities (\ref{181}) are consequences of the \textit{integral} inequalities
\begin{align}\label{241}
&\Psi_2^{-1}\left(\int_{\R_+}\Psi_2(c(Hg)(x)/U_2(x))u_2(x)\,dx\right) \leq \Psi_1^{-1}\left(\int_{\R_+}\Psi_1(g(y)/u_1(y))u_1(y)\, dy\right) \notag\\
&\textit{and}\\
&\Phi_1^{-1}\left(\int_{\R_+}\Phi_1(c(If)(x))u_1(x)\,dx\right)\leq \Phi_2^{-1}\left(\int_{\R_+}\Phi_2 \left(\frac{f(y)U_2(y)}{u_2(y)}\right)u_2(y)\, dy\right),\notag
\end{align}  
with $c=1/C$.

To obtain (\ref{241}) first take, in Theorem 2.2, $T_K=H$, $w=1/U_2$, $t=U_2$, $u=1/u_1$ and $v=u_1$ then take $T_K=I$, $w=1$, $t=u_1$, $u=U_2/(\bar{M}u_2)$, $v=u_2$. Theorem 2.2 then yields (\ref{241}) given (2.17).



\end{proof}

\begin{remark}
  One can weaken the third condition in (2.17) by replacing $\bar{M}$ with
  \[
  \bar{\bar{M}}(y)=\sum_{k=0}^{k_0}M(y, 2^{n-k-1})\chi_{(2^{n-k-1}, 2^{n-k})},
  \]
  in which $n \in \Z$ is such that $x\in (2^{n-1}, 2^n)$, $y\in (2^{n-k_0-1}, 2^{n-k_0})$.
\end{remark}

\section{Examples}
\begin{theorem}\label{Th11}
Let $k$ be a nonnegative, nonincreasing function on $\R_+$. Then,  $K(x,y)=k(x+y)$, $x,y \in \R_+$ has its $L(x,y)=k(x+y)$. Moreover, its $M(x,y)=\int_0^xk(y+u)\, du$ satisfies the monotonicity and grawth conditions on the kernel of Theorem 2.2.
\end{theorem}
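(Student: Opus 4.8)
The plan is to verify the three assertions in turn: first that the iterated rearrangement $L$ of $K(x,y)=k(x+y)$ coincides with $K$, then to read off $M$ explicitly, and finally to check the monotonicity and growth (subadditivity) conditions on the kernel in Theorem 2.2, the last being the only substantive point.

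For $L=K$ there is essentially nothing to compute. Since $k$ is nonincreasing on $\R_+$, for each fixed $x$ the function $y\mapsto k(x+y)$ is already nonincreasing, hence equals its own nonincreasing rearrangement, so $K^{*_2}(x,s)=k(x+s)$. For each fixed $s$ the function $x\mapsto k(x+s)$ is likewise nonincreasing, so the second rearrangement again changes nothing, giving $L(t,s)=(K^{*_2})^{*_1}(t,s)=k(t+s)=K(t,s)$. The only observation needed is that a kernel which is separately nonincreasing in each variable is its own iterated rearrangement.

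Computing $M$ is then immediate from its definition in Theorem 2.6: substituting $L(y^{-1},z)=k(y^{-1}+z)$ into $M(y,x)=\int_0^{x^{-1}}L(y^{-1},z)\,dz$ yields $M(y,x)=\int_0^{x^{-1}}k(y^{-1}+z)\,dz$, which is the asserted formula. The two monotonicity requirements follow at once: the variable entering through $k(y^{-1}+\cdot)$ feeds $k$ a decreasing argument as it grows, so $M$ increases in that (output) variable, while the variable controlling the upper limit $x^{-1}$ shrinks the interval of integration as it grows, so $M$ decreases in that (integration) variable — exactly the monotonicity Theorem 2.2 demands.

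The growth condition is the heart of the matter, and the step I expect to demand the most care. Writing out the three relevant values of $M$, cancelling the common part of the two integrals that share the same inner shift, and applying the substitution that turns $k(a^{-1}+z)$ into $k$ evaluated at its argument, I expect the required inequality $M(\cdot,\cdot)\le M(\cdot,\cdot)+M(\cdot,\cdot)$ to collapse to a comparison of $\int k$ over two intervals. Concretely, letting $0<a<b<c$ be the inverses of the three ordered points, one term should become $\int_{a+b}^{a+c}k$ and the dominating term $\int_{b}^{b+c}k$; since $a>0$ gives $a+b\ge b$ and $a\le b$ gives $a+c\le b+c$, one has the interval inclusion $[a+b,a+c]\subseteq[b,b+c]$, and nonnegativity of $k$ closes the estimate. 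The one delicate piece is the bookkeeping of the substitution $x\mapsto x^{-1}$, so that the ordering $0<y<z<x$ of the original variables translates correctly into $0<a<b<c$ and the growth inequality is oriented to match the established monotonicity of $M$; once that orientation is pinned down, the interval-inclusion argument is routine.
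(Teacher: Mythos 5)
Your argument is correct and follows essentially the same route as the paper: $L=K$ because $k(x+y)$ is already nonincreasing in each variable separately, and the growth condition is obtained by splitting the integral defining $M$ at the middle point's reciprocal and estimating the leftover piece, which is exactly the paper's computation. The only cosmetic difference is that you justify the final comparison $\int_{a+b}^{a+c}k\le\int_{b}^{b+c}k$ by interval inclusion together with $k\ge 0$, whereas the paper shifts the argument of $k$ and invokes its monotonicity; both yield the same estimate.
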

\begin{proof}
The monotonicity conditions are clearly satisfied. Again, given $y<z<x$, we have
\begin{align*}
M(x,y)&=\int_0^x k(y+u)\, du=\int_0^z k(y+u)\, du+\int_z^xk(y+u)\, du\\
&=M(y,z)+\int_0^{x-z}k(y+z+u)\, du\\
&\leq M(y,z)+\int_0^xk(z+u)\, du\\
&M(y,z)+M(z,x).
\end{align*}
\end{proof}

\begin{theorem}\label{Th12}
Fix the indices $p$ and $q, \, 1<p\leq q< \infty$, and suppose $K(x,y)$ is as in Theorem 3.1. Then,  if $u_1, u_2 \in M_+(\R_+)$ and $\int_{\R_+}u_1=\infty$, one has
\begin{align}\label{24}
\left(\int_{\R_+}(T_Kf)^*(t)^qu_1(t)\, dt\right)^{1/q}\leq C \left(\int_{\R_+}f^*(s)^pu_2(s)\, ds\right)^{1/p},
\end{align}
with $C>0$ independent of $f \in M_+(\R_+)$, if and only if
\begin{align}\label{3.2}
&\alpha(x)^{q-1}\int_0^x{M(x,y)^q}{u_2(y)}\, dy\leq c^{-q}\notag\\
&\beta(x)^{q-1}\int_0^x u_2(y)\, dy\leq c^{-q}\notag\\
&\textit{and}\\
& \gamma(x)^{p'-1}\int_0^x\left(\frac{\bar{M}(y)}{U_2(y)}\right)^{p'}u_2(y)\, dy\leq c^{-p'}, \quad x\in \R_+;\notag
\end{align}
here,
\begin{align*}
&\alpha(x)=\left(\int_x^{\infty}u_2(y)/U_2(y)^{p'}\, dy\right)^{q'/p'},\notag\\
&\beta(x)=\left(\int_x^{\infty}\left(\frac{M(y,x)}{U_2(y)}\right)^{p'} u_2(y)\, dy\right)^{p'/q'}\notag\\
&\textit{and}\\
&\gamma(x)=\left(\int_x^{\infty}u_2(y)\,dy\right)^{p/q}, \quad x \in \R_+.\notag
\end{align*}
\end{theorem}
\begin{proof}
The $N$-functions $\Phi_1(t)=t^q$ and $\Phi_2(t)=t^p,$ as well as the weight $u_1$,  satisfy the conditions required in Theorem \ref{Th10}. According to Theorem \ref{Th11}, so does the kernel $M$  of $H$. We conclude, then, that (2.16) holds for $T_K$, given (2.17), which in our case are the inequality (3.1) and the conditions (3.2). We observe that $\lambda$ cancels out in the latter conditions and we are left with $\alpha_1(1,x)=\alpha(x)$, $\beta_1(1,x)=\beta(x)$ and $\gamma(1,x)=\gamma(x)$.
\end{proof}



We note that if, in Theorem 2.5, the weights $v_i\equiv 1, i=1,2$, then the inequality
$$
\rho_{\Phi_1}((T_Kf)^*)\leq C \rho_{\Phi_2}(f^*)
$$ is the same as
\begin{align}\label{3.4}
\rho_{\Phi_1}(T_Kf)\leq C \rho_{\Phi_2}(f).
\end{align}
In this context we consider an integral operator $T_K$ with $K(x,y)=k(\sqrt{x^2+y^2})$, where $k(t)$ is nonincreasing, with $k(t/2)\leq Ck(t),\, t\in \R_+,$ and $\int_0^ak(\sqrt{x^2+y^2})\, dy< \infty$ for all $a \in \R_+$.

The growth condition on $k$ ensures that
\[
\frac 1C (T_Kf)(x)\leq k(x)\int_0^xf+\int_x^{\infty}kf\leq C(T_Kf)(x),
\]
for $C>1$ independent of $f \in M_+(\R_+)$ and $t\in \R_+$.

Suppose now $\Phi_1(t)=t^q$ and $\Phi_2(t)=t^p, \, 1<p\leq q< \infty$. According to Theorem 2.2, the inequality
\begin{align}\label{in25}
\left[\int_{\R_+}(k(x)\int_0^xf)^q\, dx\right]^{1/q}\leq c\left[\int_{\R_+}f^p\right]^{1/p}
\end{align}
holds if and only if
\[
x \alpha(x)^{p'-1}=\int_0^x \alpha(x)^{p'-1},
\]
where
\[
\alpha(x)=\left[\int_x^{\infty} k(y)^q\, dy\right]^{p/q},
\]
that is,
\begin{align}\label{3.6}
x \left(\int_x^{\infty}k(y)^q\,dy\right)^{p'/q}\leq c^{-p'}, \quad x \in \R_+.
\end{align}

Again,
\[
\left[\int_{\R_+}\left(\int_x^{\infty}\frac{f(y)}{k(y)}\,dy\right)^q\,dx\right]^{1/q}\leq C\left[\int_{\R_+}f^p\right]^{1/p}
\]
if and only if
\[
\left[\int_{\R_+}\left(k(x)\int_0^xg\right)^{p'}\,dx\right]^{1/p'}\leq C \left[\int_{\R_+}g^{q'}\right]^{1/q}
\]
or
\begin{align}\label{in27}
x\left[\int_x^{\infty} k(y)^{p'}\, dy\right]^{q/p'}\leq c^{-q}, \quad x \in \R_+.
\end{align}
Thus, (3.1) holds when and only (3.2) and (3.5) do.

For example, if $K(x,y)=\frac{1}{({x^2+y^2})^{\lambda/2}}, \,\max[1/p', 1/q]<\lambda<1, \, k(x)=x^{-\lambda}$ and the conditions become
\[
x^{1+p'(1/q-\lambda)}\leq c^{-p'}
\]
and
\[
x^{1+q(1/p'-\lambda)}\leq c^{-q},
\]
which conditions are satisfied if and only if $\dfrac 1q=\lambda-\dfrac{1}{p'}$.

This kernel $K$ does not satisfy the classical Kantorovic condition usually invoked to prove (3.1) for $T_K$. Indeed,
\[
K(x,y)\approx {(x+y)^{-\lambda}},
\]
whence, for, $p>1$,
\[
\left[\int_0^{\infty}K(x,y)^{p'}\,dy\right]^{1/p'}\approx x^{-\lambda+1/p'}
\]
and, therefore,
\[
\left[\int_0^{\infty}\left[\int_0^{\infty}K(x,y)^{p'}\,dy\right]^q\, dx\right]^{1/q}\approx \left[\int_0^{\infty}x^{(-\lambda+1/p')q}\, dx\right]^{1/q}=\infty.
\]

Finally, R.~O'Neil in \cite{O} proved that, for $K \in M_+(\R_+^2)$, one has, for each $f \in M_+(\R_+)$,
\[
\frac 1x \int_0^x(T_Kf)^{*}(y)\, dy\leq \int_0^{x}K^*(xy)f^*(y)\, dy.
\]
See also Torchinsky \cite{T}. Given $K(x,y)=k(\sqrt{x^2+y^2})$, as above, $K^*(t)=k(t^{1/2})$, so the right side of the O'Neil inequality is
\[
\int_0^{\infty}k(\sqrt{xy})f^*(y)\, dy.
\]

We see that, if $\Phi_1(2t)\approx\Phi_1(t)$, for $t\gg 1$, the O'Neil inequality yields
\[
\rho_{\Phi_1}((T_Kf)^*)\leq \rho_{\Phi_1}\left(\int_0^{\infty}k(\sqrt{xy})f^*(y)\,dy\right)
\]
Again, we have
\[
\rho_{\Phi_1}((T_Kf)^*)\leq \rho_{\Phi_1}(T_Kf^*)\leq \rho_{\Phi_1}\left(\int_{\R_+}k(\sqrt{x^2+y^2})f^*(y)\, dy\right)
\]
Observing that $k(\sqrt{x^2+y^2})=k\left(\sqrt{\frac{x^2+y^2}{xy}}\sqrt{xy}\right)=k\left(\sqrt{\frac yx+ \frac xy}\sqrt{xy}\right)$, we see our bound is tighter.

\section{Past results}
As mentioned above,when $\Phi_1(t)=t^q, \, \Phi_2(t)=t^p, \, p,q\in [1, \infty]$, and $u_1=u_2=1$, (3.4) becomes the classical Lebesgue inequality
\begin{align}
\left[\int(T_Kf)(x)^q\, dx\right]^{1/q}\leq C \left[\int f(y)^p\, dy\right]^{1/p}.
\end{align}
In the case $K$ is homogeneous of degree $-1$,  that is, $K(\lambda x, \lambda y)=\lambda^{-1}K(x,y), \, \lambda, x, y \in \R_+$, the well-known result of Hardy-Littlewood-Polya in \cite{HLP} asserts that (4.1) holds when $p=q$ if and only if
\[
\int_{\R_+}K(1,y)y^{-1/p}\, dy< \infty.
\]

More generally, Theorem 3.1 of \cite{Boyd} implies

\begin{theorem}\label{last}
Suppose the $N$-function $\Phi$ and the weight $u$ on $\R_+$ satisfy the conditions of Theorem 2.4. Then, the Orlicz-Lorentz functional 
\[
\lambda_{\Phi,u}(f)=\inf \{x>0: \int_{\R_+}\Phi\left(\frac{f^*(x)}{\lambda}\right)u(x)\,dx\leq 1\}
\]
is equivalent to a so-called rearrangement-invariant (r.i.) norm. As such, the dilation operator norm
\[
h_{\Phi,u}(t)=\inf \{M>0: \lambda_{\Phi,u}(f^*(ts))\leq M \lambda_{\Phi,u}(f^*(s)), \quad f \in M(\R_+)\}.
\]
Given $K \in M_+(\R^2_+)$, homogeneous of degree $-1$, one has
\[
\rho_{\Phi,u}(T_Kf)\leq C \rho_{\Phi,u}(f),
\]
with $C>0$ independent of $f\in M_+(\R_+)$, provided
\begin{align*}\label{4.2.1}
\int_{\R_+}K(1,t)h_{\Phi,u}(t)\, dt< \infty.
\end{align*}

Let $\Phi(t)=\Phi_p(t)=t^p, p>1$ and $u(x)=u_{\alpha}(x)=(\alpha+1)x^{\alpha}$, $\alpha>1$. Then,
\[
\rho_{p,u}(t^{-1}\int_0^tf^*)\leq C \rho_{p,u}(f^*),
\]
provided
\[
t^p\int_t^{\infty}s^{\alpha-p}\,ds\leq C \int_0^ts^{\alpha}\,ds,
\]
which, indeed, holds for all $\alpha>-1$.

Again, for $t\in \R_+$,
\[
h_{\Phi,u}(t)=h_{p, \alpha}(t)=\sup_{s>0}\left[\frac{\int_0^{s/t}y^{\alpha}\,dy+(s/t)^p\int_{s/t}^{\infty}y^{\alpha-p}\, dy}{\int_0^sy^{\alpha}\, dy+s^p\int_s^{\infty}y^{\alpha -p}\, dy}\right]\approx t^{-\alpha-1}.
\]
See [6].
\end{theorem}
\begin{proof}
We observe that if $K(1,t)$ decreases in $t$ on $\R_+$, (4.2) will, at least, require $-1<\alpha<0$.
Again, the assumptions on the $N$-functions and weights guarantee that the $\rho_{\Phi_i, u_i}\, i=1,2,$ are equivalent to so-called rearrangement-invariant norms on $M_+(\R_+)$. Theorem 3.1 in \cite{Boyd} then ensures that (4.3) implies (4.2).
\end{proof}


The classical mixed norm condition
\[
\int_{\R_+}\left[\int_{\R_+}K(x,y)^{p'}\, dy\right]^{q/p'}\, dx< \infty, \quad p'=\frac{p}{p-1},
\]
guaranteeing (1) for $T_K$ is due to Kantorovic \cite{Kan}. In Walsh \cite{W} such mixed norm conditions involving so-called weak Lorentz norms followed by interpolation are shown to yield the same inequality. This extended earlier work of Strichartz \cite{S}. For a discussion of yet earlier work of this kind see \cite{KZPS}.

We have already introduced the generalized Hardy operators (GHOs)
\[
(T_Kf)(x)=\int_0^xK(x,y)f(y)\, dy,
\]
studied in Bloom-Kerman \cite{BK}, where $K(x,y)=K(x,y)\chi_{(0,x)}(y)$, with $K$ increasing in $x$, decreasing in $y$ and satisfying the growth condition
\[
K(x,y)\leq K(x,z)+K(z,y), \quad y<z<x.
\]
An exhaustive treatment of these and similar operators on monotone functions for all $p,q \in \R_+$ is given in Gogatishvili-Stepanov \cite{GS}.


\end{document}